\documentclass[reqno,11pt]{amsart}
\usepackage[foot]{amsaddr}
\usepackage{bbold}
\usepackage{charter}
\usepackage[margin=1in]{geometry}
\usepackage[colorlinks=true,linktoc=all,linkcolor=blue,citecolor=blue]{hyperref}

\theoremstyle{plain}
\newtheorem{theorem}{Theorem}[section]
\newtheorem*{theorem*}{Theorem}
\newtheorem{proposition}[theorem]{Proposition}
\newtheorem{lemma}[theorem]{Lemma}
\newtheorem{corollary}[theorem]{Corollary}
\theoremstyle{definition}

\newtheorem*{definition*}{Definition}
\newtheorem{remark}[theorem]{Remark}

\newtheorem{observation}[theorem]{Observation}
\numberwithin{equation}{section}
\everymath{\displaystyle}
\allowdisplaybreaks

\newcommand{\ds}{\displaystyle}
\newcommand{\C}{\mathbb{C}}
\newcommand{\D}{\mathbb{D}}

\newcommand{\N}{\mathbb{N}}

\newcommand{\Bloch}{\mathcal{B}}
\newcommand{\lilBloch}{\mathcal{B}_0}
\newcommand{\Aut}{\mathrm{Aut}}

\newcommand{\ord}{\mathrm{ord}}
\newcommand{\Log}{\mathrm{Log}}
\newcommand{\id}{\mathrm{id}}

\newcommand{\qte}[1]{``#1"}
\newcommand{\defeq}{\stackrel{\mathrm{def}}{=}}
\newcommand{\ip}[1]{\langle #1 \rangle}
\newcommand{\norm}[1]{\left|\left|#1\right|\right|}
\newcommand{\supnorm}[1]{\norm{#1}_\infty}
\newcommand{\blochnorm}[1]{\norm{#1}_\Bloch}
\renewcommand{\mod}[1]{\left|#1\right|}
\newcommand{\conj}[1]{\overline{#1}}
\newcommand{\mulop}[1]{M_{#1}}

\newcommand{\wcompop}[1]{W_{#1}}
\newcommand{\mop}{\mulop{\psi}}

\newcommand{\wco}{\wcompop{\psi,\varphi}}

\title[Isometric Multiplication Operators]{Isometries and Spectra of Multiplication Operators on the Bloch Space}

\author{Robert F. Allen and Flavia Colonna}

\address{Department of Mathematical Sciences, George Mason University}
\email{rallen2@gmu.edu, fcolonna@gmu.edu}

\date{}

\keywords{Multiplication operators, Bloch space, Little Bloch space.}
\subjclass[2000]{primary: 47B38, secondary: 30D45}

\begin{document}
\begin{abstract} In this paper, we establish bounds on the norm of multiplication operators on the Bloch space of the unit disk via weighted composition operators.  In doing so, we characterize the isometric multiplication operators to be precisely those induced by constant functions of modulus 1.  We then describe the spectrum of the multiplication operators in terms of the range of the symbol.  Lastly, we identify the isometries and spectra of a particular class of weighted composition operators on the Bloch space.
\end{abstract}

\maketitle

\section{Introduction}

Let $\D$ denote the open unit disk in the complex plane.  An analytic function $f$ on $\D$ is said to be \emph{Bloch} if  \[\beta_f = \sup_{z \in \D}\;(1-\mod{z}^2)\mod{f'(z)} < \infty.\]  The mapping $f \mapsto \beta_f$ is a semi-norm on the space $\Bloch$ of Bloch functions, called the \emph{Bloch space}.  Under the norm \[\blochnorm{f} = \mod{f(0)} + \beta_f,\] the Bloch space is a Banach space. The \emph{little Bloch space} $\lilBloch$ is the closed subspace of $\Bloch$ consisting of functions $f \in \Bloch$ satisfying \[\lim_{\mod{z} \to 1} (1-\mod{z}^2)\mod{f'(z)} = 0.\]  For further treatment of Bloch functions and the Bloch space see \cite{Pommerenke:70}, \cite{AndersonCluniePomerenke:74}, and \cite{Cima:79}.

Suppose $\psi$ is a fixed analytic function on $\D$.  The \emph{multiplication operator} on the Bloch space is defined as \[\mop(f) = \psi f.\]  In \cite{BrownShields:91}, Brown and Shields proved that $M_\psi$ is a bounded operator on the Bloch space if and only if $\psi \in H^\infty(\D)$ and \[\mod{\psi'(z)} = O\left(\frac{1}{(1-\mod{z})\log\frac{1}{1-\mod{z}}}\right).\]  This implies that \[(1-\mod{z}^2)\mod{\psi'(z)} = O\left(\frac{1}{\log\frac{1}{1-\mod{z}}}\right) \to 0\] as  $\mod{z} \to 1$.  Thus $\psi \in \lilBloch$.

Suppose further that $\varphi$ is a fixed analytic self-map of $\D$.  The \emph{weighted composition operator} on the Bloch space is defined as \[\wco(f) = \psi(f\circ\varphi).\]  There is a reason for introducing the weighted composition operator at this point.  The multiplication operator is a \qte{degenerate} weighted composition operator; if $\varphi$ is the identity map, then $\wco = \mop$.  Likewise, the \emph{composition operator} on the Bloch space, defined as \[C_\varphi(f) = f\circ\varphi,\] is a \qte{degenerate} weighted composition operator when $\psi$ is taken to be the constant function 1.

In \cite{OhnoZhao:01}, Ohno and Zhao proved that $\wco$ is a bounded operator on the Bloch space if and only if the following two properties are satisfied:

\begin{equation}
\label{wco condition 1} \ds \sup_{z \in \D}\; (1-\mod{z}^2)\mod{\psi'(z)}\log\frac{2}{1-\mod{\varphi(z)}^2} < \infty,
\end{equation}
\begin{equation}
\label{wco condition 2} \ds \sup_{z \in \D}\; \frac{1-\mod{z}^2}{1-\mod{\varphi(z)}^2}\mod{\psi(z)}\mod{\varphi'(z)} < \infty.
\end{equation}  It can easily be verified that if $\wco$ is bounded on the Bloch space, then $\psi$ is a Bloch function.

The characterization of the isometries is an open problem for most Banach spaces of analytic functions; the set of isometries, in most cases, is too large to study all at once.  Even so, there are spaces for which the isometries are known.  In \cite{Forelli:64}, Forelli proved the isometries on the Hardy space $H^p$, for $p \neq 2$ are certain weighted composition operators.  Also, El-Gebeily and Wolfe completely characterized the isometries on the disk algebra $\mathcal{A}_0 = H^\infty(\D) \cap C(\partial\D)$ \cite{ElGebeilyWolfe:85}.  For most other spaces, a complete picture of the isometries is not known.  The surjective isometries for the Bergman space $A^p$ (see \cite{Rudin:76}) and the weighted Bergman space $A^p_\omega$ (see \cite{Kolaski:82}) are known.

The first attempt at studying the isometries on the Bloch space was made by Cima and Wogen in \cite{CimaWogen:80}.  They studied the isometries on the subspace of the Bloch space whose elements fix the origin.  On this space, they showed that the surjective isometries are normalized compressions of composition operators induced by disk automorphisms.  However, on the entire set of Bloch functions, a description of all isometries is still unknown.  The current trend in the literature is to attack this problem one operator at a time.

To date, the only complete characterization of isometries on the Bloch space are those among the composition operators.  Independently, two different descriptions have been developed.  In \cite{Colonna:05}, the second author identified the isometric composition operators in terms of the canonical factorization of the symbol.  Mart\'{i}n and Vukoti\'{c} in \cite{MartinVukotic:07}, identified the isometric composition operators in terms of the hyperbolic derivative and cluster set of the symbol.  The purpose of this paper is to further develop the understanding of the isometries on the Bloch space by characterizing the isometric multiplication operators.  The authors are unaware of any such description of the isometric multiplication operators on the Bloch space in the literature.

\subsection{Organization of the paper}
In Section 2, we determine estimates on the norm of the weighted composition operator $\wco$ on the Bloch space.  As a corollary, we deduce estimates on the norm of the multiplication operator $\mop$.  In the case where $\psi \equiv 1$, the norm estimates agree with those derived by Xiong in \cite{Xiong:04} for the composition operator $C_\varphi$.

In Section 3, we prove that the symbols of isometric multiplication operators are precisely the constant functions of modulus 1.

In Section 4, we show that the spectrum of the multiplication operator is the closure of the range of the symbol.  From this, we deduce the spectra of the isometric multiplication operators to be single-element subsets of the unit circle.

In Section 5, we discuss the weighted composition operators whose symbols induce isometries individually and describe their spectrum.

\section{Norm Estimates on Weighted Composition Operators}
In this section we establish estimates on $\norm{\wco}$.  In order to write these estimates in a succinct way, we introduce the following notation.  Let $\psi \in \Bloch$ and $\varphi:\D \to \D$ analytic functions satisfying conditions (\ref{wco condition 1}) and (\ref{wco condition 2}).  Let
\[\begin{aligned}
\tau_{\psi,\varphi}^\infty &\defeq \sup_{z \in \D}\;\frac{1-\mod{z}^2}{1-\mod{\varphi(z)}^2}\mod{\psi(z)}\mod{\varphi'(z)}, \text{ and}\\
\sigma_{\psi,\varphi}^\infty &\defeq \sup_{z \in \D}\;\frac{1}{2}(1-\mod{z}^2)\mod{\psi'(z)}\log\frac{1+\mod{\varphi(z)}}{1-\mod{\varphi(z)}},
\end{aligned}\] which are both finite.  In \cite{Xiong:04}, Xiong defined \[\tau_\varphi^\infty = \sup_{z \in \D}\frac{1-\mod{z}^2}{1-\mod{\varphi(z)}^2}\mod{\varphi'(z)}\] to obtain an upper estimate on the norm of a composition operator on $\Bloch$.  Note, if $\psi \equiv 1$, then $\tau_{\psi,\varphi}^\infty = \tau_\varphi^\infty$.  Also, if $\varphi$ is the identity map, then
\[\sigma_{\psi,\varphi}^\infty = \sigma_\psi^\infty \defeq \sup_{z \in \D}\;\frac{1}{2}(1-\mod{z}^2)\mod{\psi'(z)}\log\frac{1+\mod{z}}{1-\mod{z}}.\]

To determine an upper bound on $\norm{\wco}$ we use the following lemma, a proof of which can be found in \cite{Colonna:88} or \cite{Xiong:04}.

\begin{lemma}\label{inequality lemma} If $f \in \Bloch$, then \[\label{xiong_identity} \mod{f(z)} \leq \mod{f(0)} + \frac{1}{2}\beta_f\log\frac{1+\mod{z}}{1-\mod{z}},\] for all $z \in \D$.
\end{lemma}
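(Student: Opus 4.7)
The plan is to integrate $f'$ along the radial segment from $0$ to $z$ and exploit the pointwise Bloch bound $|f'(w)|\le \beta_f/(1-|w|^2)$. Concretely, I would parametrize the segment by $w=tz$ for $t\in[0,1]$, write
$$f(z)-f(0)=\int_0^1 f'(tz)\,z\,dt,$$
and estimate in absolute value to obtain
$$|f(z)-f(0)|\le \beta_f\int_0^1\frac{|z|}{1-t^2|z|^2}\,dt.$$

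Next I would evaluate the integral explicitly by partial fractions, using
$$\frac{1}{1-t^2|z|^2}=\frac{1}{2}\left(\frac{1}{1-t|z|}+\frac{1}{1+t|z|}\right),$$
so that integration against $|z|\,dt$ from $0$ to $1$ yields
$$\int_0^1\frac{|z|\,dt}{1-t^2|z|^2}=\frac{1}{2}\log\frac{1+|z|}{1-|z|}.$$
Combining with the previous step gives $|f(z)-f(0)|\le \tfrac{1}{2}\beta_f\log\frac{1+|z|}{1-|z|}$, and the triangle inequality $|f(z)|\le|f(0)|+|f(z)-f(0)|$ finishes the argument.

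There is essentially no obstacle here: the lemma is a standard application of the fundamental theorem of calculus along a radius together with the defining Bloch estimate on $|f'|$. The only mild technical point is that the path chosen must stay inside $\D$, which is automatic for the straight segment from $0$ to $z$; any other path from $0$ to $z$ would still work but would complicate the closed-form integration.
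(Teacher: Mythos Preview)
Your argument is correct and is exactly the standard proof of this growth estimate. The paper itself does not supply a proof of this lemma; it simply cites \cite{Colonna:88} and \cite{Xiong:04}. The argument you give---integrating $f'$ along the radial segment $[0,z]$, invoking the defining bound $(1-|w|^2)|f'(w)|\le\beta_f$, and evaluating $\int_0^{|z|}\frac{du}{1-u^2}=\tfrac12\log\frac{1+|z|}{1-|z|}$---is precisely the one found in those references, so there is nothing to compare.
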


\noindent With this lemma and the notation defined above, we are now prepared to establish an upper bound on $\norm{\wco}$.

\begin{theorem}Suppose $\psi$ is an analytic function on $\D$ and $\varphi$ is an analytic self-map of $\D$ that induce a bounded weighted composition operator $\wco$ on $\Bloch$.  Then
\[\norm{\wco} \leq \max\left\{\blochnorm{\psi}, \frac{1}{2}\mod{\psi(0)}\log\frac{1+\mod{\varphi(0)}}{1-\mod{\varphi(0)}} + \tau_{\psi,\varphi}^\infty + \sigma_{\psi,\varphi}^\infty\right\}.\]
\end{theorem}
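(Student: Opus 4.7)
The plan is to compute $\|W_{\psi,\varphi}f\|_\Bloch$ by splitting the norm into its value at $0$ and its Bloch semi-norm, estimate each piece using the product rule and Lemma \ref{inequality lemma}, and conclude by a convex-combination argument.

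First, I would compute $W_{\psi,\varphi}f(0) = \psi(0)f(\varphi(0))$ and, by the product rule, $(W_{\psi,\varphi}f)'(z) = \psi'(z)f(\varphi(z)) + \psi(z)f'(\varphi(z))\varphi'(z)$. Applying Lemma \ref{inequality lemma} at $\varphi(0)$ gives
$$|W_{\psi,\varphi}f(0)| \leq |\psi(0)||f(0)| + \tfrac{1}{2}|\psi(0)|\beta_f\log\frac{1+|\varphi(0)|}{1-|\varphi(0)|}.$$

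Next, for $\beta_{W_{\psi,\varphi}f}$, I would apply the triangle inequality to the derivative and treat the two summands separately. For the $\psi'(z)f(\varphi(z))$ term, I apply Lemma \ref{inequality lemma} at $\varphi(z)$ to bound $|f(\varphi(z))|$, producing an $|f(0)|\beta_\psi$ contribution and a $\beta_f \sigma_{\psi,\varphi}^\infty$ contribution after recognizing the defined supremum. For the $\psi(z)f'(\varphi(z))\varphi'(z)$ term, I multiply and divide by $1-|\varphi(z)|^2$ to produce $(1-|\varphi(z)|^2)|f'(\varphi(z))| \leq \beta_f$ times the supremum defining $\tau_{\psi,\varphi}^\infty$. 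The two bounds combine to
$$\beta_{W_{\psi,\varphi}f} \leq |f(0)|\beta_\psi + \beta_f\bigl(\sigma_{\psi,\varphi}^\infty + \tau_{\psi,\varphi}^\infty\bigr).$$

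Adding the two estimates and regrouping by the factors $|f(0)|$ and $\beta_f$ yields
$$\|W_{\psi,\varphi}f\|_\Bloch \leq |f(0)|\,\|\psi\|_\Bloch + \beta_f \cdot A,$$
where $A = \tfrac{1}{2}|\psi(0)|\log\frac{1+|\varphi(0)|}{1-|\varphi(0)|} + \tau_{\psi,\varphi}^\infty + \sigma_{\psi,\varphi}^\infty$. The final step is the observation that if $|f(0)| + \beta_f \leq 1$ and $a,b \geq 0$, then $a\,|f(0)| + b\,\beta_f \leq \max\{a,b\}$, so taking the supremum over the unit ball of $\Bloch$ gives exactly the asserted bound.

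None of the individual steps are especially subtle; the one place to be careful is making sure the cross-terms line up so that the coefficient of $|f(0)|$ collapses to $\|\psi\|_\Bloch = |\psi(0)| + \beta_\psi$ and the coefficient of $\beta_f$ collapses to $A$, so that the convex-combination trick at the end produces a max rather than a sum. Everything else is tracking the product rule and invoking Lemma \ref{inequality lemma} twice.
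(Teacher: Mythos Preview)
Your proposal is correct and follows essentially the same route as the paper: apply the product rule, estimate the two pieces of the derivative via Lemma~\ref{inequality lemma} and the definitions of $\tau_{\psi,\varphi}^\infty$ and $\sigma_{\psi,\varphi}^\infty$, collect into the form $|f(0)|\,\blochnorm{\psi} + \beta_f\cdot A$, and finish with the convex-combination observation. The only cosmetic difference is that the paper normalizes $\blochnorm{f}=1$ and writes out the linear interpolation in $\beta_f$ explicitly via two cases, whereas you phrase the last step as the one-line inequality $a\,|f(0)| + b\,\beta_f \le \max\{a,b\}$ for $|f(0)|+\beta_f\le 1$.
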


\begin{proof}
Let $f \in \Bloch$ such that $\blochnorm{f} = 1$.  Then
\[\begin{aligned}
\blochnorm{\wco f} &\leq \mod{\psi(0)}\mod{f(\varphi(0))} + \sup_{z \in \D}\;(1-\mod{z}^2)\mod{\psi(z)}\mod{f'(\varphi(z))}\mod{\varphi'(z)}\\
    &\qquad + \sup_{z \in \D}\;(1-\mod{z}^2)\mod{\psi'(z)}\mod{f(\varphi(z))}\\
&= \mod{\psi(0)}\mod{f(\varphi(0))} + \sup_{z \in \D}\;\frac{1-\mod{z}^2}{1-\mod{\varphi(z)}^2}\mod{\psi(z)}\mod{\varphi'(z)}(1-\mod{\varphi(z)}^2)\mod{f'(\varphi(z))}\\
    &\qquad + \sup_{z \in \D}\;(1-\mod{z}^2)\mod{\psi'(z)}\mod{f(\varphi(z))}\\
&\leq \mod{\psi(0)}\mod{f(\varphi(0))} + \tau_{\psi,\varphi}^\infty\beta_f + \sup_{z \in \D}\;(1-\mod{z}^2)\mod{\psi'(z)}\mod{f(\varphi(z))}.
\end{aligned}\]
By Lemma \ref{inequality lemma}, we have $\mod{f(\varphi(z))} \leq \mod{f(0)} + \frac{1}{2}\beta_f\log\ds\frac{1+\mod{\varphi(z)}}{1-\mod{\varphi(z)}}$.  Thus
\[\begin{aligned}
\blochnorm{\wco f} &\leq \mod{\psi(0)}\mod{f(\varphi(0))} + \tau_{\psi,\varphi}^\infty\beta_f + \beta_\psi\mod{f(0)} + \sigma_{\psi,\varphi}^\infty\beta_f.
\end{aligned}\]
Since $\mod{f(\varphi(0))} \leq \mod{f(0)} + \frac{1}{2}\beta_f\log\ds\frac{1+\mod{\varphi(0)}}{1-\mod{\varphi(0)}}$, and recalling that $\mod{f(0)} = 1-\beta_f$ we deduce
\[\begin{aligned}
\blochnorm{\wco f} &\leq \blochnorm{\psi}\mod{f(0)} + \left(\frac{1}{2}\mod{\psi(0)}\log\frac{1+\mod{\varphi(0)}}{1-\mod{\varphi(0)}} + \tau_{\psi,\varphi}^\infty + \sigma_{\psi,\varphi}^\infty\right)\beta_f\\
&= \blochnorm{\psi} + \left(\frac{1}{2}\mod{\psi(0)}\log\frac{1+\mod{\varphi(0)}}{1-\mod{\varphi(0)}} + \tau_{\psi,\varphi}^\infty + \sigma_{\psi,\varphi}^\infty - \blochnorm{\psi}\right)\beta_f.
\end{aligned}\]

\noindent If $\displaystyle\frac{1}{2}\mod{\psi(0)}\log\frac{1+\mod{\varphi(0)}}{1-\mod{\varphi(0)}} + \tau_{\psi,\varphi}^\infty + \sigma_{\psi,\varphi}^\infty \leq \blochnorm{\psi}$, then \[\blochnorm{\wco f} \leq \blochnorm{\psi}.\]  If $\displaystyle\frac{1}{2}\mod{\psi(0)}\log\frac{1+\mod{\varphi(0)}}{1-\mod{\varphi(0)}} + \tau_{\psi,\varphi}^\infty + \sigma_{\psi,\varphi}^\infty \geq \blochnorm{\psi}$, then
\[\begin{aligned}
\blochnorm{\wco f} &\leq \blochnorm{\psi} + \frac{1}{2}\mod{\psi(0)}\log\frac{1+\mod{\varphi(0)}}{1-\mod{\varphi(0)}} + \tau_{\psi,\varphi}^\infty + \sigma_{\psi,\varphi}^\infty - \blochnorm{\psi}\\
&= \frac{1}{2}\mod{\psi(0)}\log\frac{1+\mod{\varphi(0)}}{1-\mod{\varphi(0)}} + \tau_{\psi,\varphi}^\infty + \sigma_{\psi,\varphi}^\infty.
\end{aligned}\]
Therefore, \[\norm{\wco} \leq \max\left\{\blochnorm{\psi}, \frac{1}{2}\mod{\psi(0)}\log\frac{1+\mod{\varphi(0)}}{1-\mod{\varphi(0)}} + \tau_{\psi,\varphi}^\infty + \sigma_{\psi,\varphi}^\infty\right\},\] as desired.
\end{proof}

To determine a lower bound on $\norm{\wco}$, we apply the appropriate test functions.

\begin{theorem} Suppose $\psi$ is an analytic function on $\D$ and $\varphi$ is an analytic self-map of $\D$ inducing a bounded weighted composition operator $\wco$ on $\Bloch$.  Then
\begin{equation}\label{wco lower bound}\norm{\wco} \geq \max\left\{\blochnorm{\psi}, \frac{1}{2}\mod{\psi(0)}\log\frac{1+\mod{\varphi(0)}}{1-\mod{\varphi(0)}}\right\}.\end{equation}
\end{theorem}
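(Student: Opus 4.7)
The plan is to produce the two lower bounds separately by plugging in carefully chosen test functions of Bloch norm $1$, since for any $f$ with $\|f\|_\Bloch \le 1$ we automatically have $\|\wco\| \ge \|\wco f\|_\Bloch$.

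For the bound $\|\wco\| \ge \blochnorm{\psi}$, I would take the constant function $f \equiv 1$. Then $\blochnorm{f}=1$ and $\wco f = \psi(1\circ\varphi) = \psi$, so $\|\wco f\|_\Bloch = \blochnorm{\psi}$, giving the inequality at once. This handles the first term inside the maximum.

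For the second bound, the natural candidate is a rotated version of $\tfrac{1}{2}\Log\frac{1+z}{1-z}$, chosen so that its value at $\varphi(0)$ is real and as large as possible. Assume first $\varphi(0)\neq 0$, and set $\lambda = \overline{\varphi(0)}/\mod{\varphi(0)}$, so $\lambda\varphi(0) = \mod{\varphi(0)}$. Define
$$f(z) = \frac{1}{2}\log\frac{1+\lambda z}{1-\lambda z}.$$
A direct computation gives $f(0)=0$ and $f'(z) = \lambda/(1-\lambda^2 z^2)$, so $(1-\mod{z}^2)\mod{f'(z)} = (1-\mod{z}^2)/\mod{1-(\lambda z)^2}$, which is bounded above by $1$ (using $|1-w^2|\ge 1-|w|^2$) with equality along $\lambda z\in\R$. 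Hence $\beta_f = 1$ and $\blochnorm{f}=1$. Moreover $f(\varphi(0)) = \tfrac{1}{2}\log\frac{1+\mod{\varphi(0)}}{1-\mod{\varphi(0)}}$, so
$$\blochnorm{\wco f} \ge \mod{(\wco f)(0)} = \mod{\psi(0)}\,\mod{f(\varphi(0))} = \tfrac{1}{2}\mod{\psi(0)}\log\tfrac{1+\mod{\varphi(0)}}{1-\mod{\varphi(0)}},$$
which yields the second lower bound. The remaining case $\varphi(0)=0$ is immediate because the claimed quantity is then zero.

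Combining the two bounds by taking the max finishes the proof. I do not anticipate a substantive obstacle: the only mild care needed is verifying $\blochnorm{f}=1$ for the logarithmic test function (i.e.\ checking that the elementary inequality $\mod{1-w^2}\ge 1-\mod{w}^2$ gives $\beta_f\le 1$, and exhibiting a sequence on which equality is approached to conclude $\beta_f = 1$) and the cosmetic treatment of $\varphi(0)=0$.
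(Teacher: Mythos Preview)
Your proposal is correct and follows essentially the same approach as the paper: both use the constant function $1$ to get $\norm{\wco}\ge\blochnorm{\psi}$, and the rotated logarithm $f(z)=\tfrac{1}{2}\Log\frac{1+e^{-i\theta}z}{1-e^{-i\theta}z}$ (your $\lambda$ is exactly the paper's $e^{-i\theta}$) to obtain the second bound via $\mod{(\wco f)(0)}$. The only difference is that you supply the verification that $\blochnorm{f}=1$, which the paper simply asserts.
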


\begin{proof} If we take the test function $f(z) = 1$, then
\[\norm{\wco} \geq \blochnorm{\wco f} = \blochnorm{\psi}.\]

\noindent If $\varphi(0) = 0$, then (\ref{wco lower bound}) holds trivially.  If $\varphi(0) \neq 0$, then write $\varphi(0) = \mod{\varphi(0)}e^{i\theta}$, for some $\theta \in [0,2\pi)$.  Let $f(z) = \displaystyle\frac{1}{2}\Log\frac{1+e^{-i\theta}z}{1-e^{-i\theta}z}$, where $\Log$ denotes the principal branch of the logarithm.  Then $f$ is a Bloch function with Bloch norm 1.  Thus
\[\norm{\wco} \geq \blochnorm{\wco f} \geq \mod{\psi(0)f(\varphi(0))} = \frac{1}{2}\mod{\psi(0)}\log\frac{1+\mod{\varphi(0)}}{1-\mod{\varphi(0)}}.\]

\noindent Therefore, \[\norm{\wco} \geq \max\left\{\blochnorm{\psi}, \frac{1}{2}\mod{\psi(0)}\log\frac{1+\mod{\varphi(0)}}{1-\mod{\varphi(0)}}\right\},\] as desired.
\end{proof}

\begin{remark} By taking $\psi \equiv 1$, we deduce the estimates on the norm of $C_\varphi$ established in \cite{Xiong:04}:

\[\max\left\{1, \frac{1}{2}\log\frac{1+\mod{\varphi(0)}}{1-\mod{\varphi(0)}}\right\} \leq \norm{C_\varphi} \leq \max\left\{1, \frac{1}{2}\log\frac{1+\mod{\varphi(0)}}{1-\mod{\varphi(0)}} + \tau_\varphi^\infty\right\}.\]
\end{remark}

We now deduce estimates on the norm of the multiplication operator $M_\psi$ on $\Bloch$.

\begin{corollary}\label{bounds mult op} Suppose $\psi$ is an analytic function on $\D$ inducing a bounded multiplication operator $M_\psi$ on $\Bloch$.  Then
\[\max\left\{\blochnorm{\psi},\supnorm{\psi}\right\} \leq \norm{M_\psi} \leq \max\left\{\blochnorm{\psi}, \supnorm{\psi} + \sigma_\psi^\infty\right\}.\]  In particular, if $\psi(0) = 0$, then \[\supnorm{\psi} \leq \norm{\mop} \leq \supnorm{\psi}+\sigma_\psi^\infty.\]
\end{corollary}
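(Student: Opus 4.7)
The plan is to specialize the two preceding norm-estimate theorems to $\varphi = \id$ and then supplement the lower bound with an independent test-function argument. With $\varphi = \id$ one has $\varphi(0) = 0$ (so the logarithmic term vanishes), $\varphi' \equiv 1$, and consequently $\tau_{\psi,\id}^\infty = \sup_{z \in \D} \mod{\psi(z)} = \supnorm{\psi}$ and $\sigma_{\psi,\id}^\infty = \sigma_\psi^\infty$. Substituting into the upper-bound theorem yields the asserted upper bound, while the lower-bound theorem immediately produces the partial lower estimate $\norm{\mop} \ge \blochnorm{\psi}$.

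The remaining piece, $\norm{\mop} \ge \supnorm{\psi}$, I would establish by a direct test-function computation. For each $a \in \D$, set
$$f_a(z) = \frac{-(1-\mod{a}^2)z}{1 - \conj{a}z}.$$
A short computation gives $f_a(0) = 0$, $f_a(a) = -a$, and $f_a'(z) = -(1-\mod{a}^2)/(1-\conj{a}z)^2$. The Schwarz--Pick estimate $(1 - \mod{z}^2)(1 - \mod{a}^2)/\mod{1 - \conj{a}z}^2 \le 1$, with equality at $z = a$, yields $\blochnorm{f_a} = 1$. Evaluating $(\mop f_a)' = \psi' f_a + \psi f_a'$ at $z = a$ and multiplying by $1 - \mod{a}^2$ then produces
$$(1 - \mod{a}^2) \mod{(\mop f_a)'(a)} = \mod{\psi(a) + a(1 - \mod{a}^2)\psi'(a)} \ge \mod{\psi(a)} - \mod{a}(1 - \mod{a}^2)\mod{\psi'(a)}.$$
By the maximum modulus principle one can choose a sequence $\{a_n\} \subset \D$ with $\mod{a_n} \to 1$ and $\mod{\psi(a_n)} \to \supnorm{\psi}$; since Brown--Shields forces $\psi \in \lilBloch$, the correction term $(1 - \mod{a_n}^2)\mod{\psi'(a_n)}$ vanishes in the limit, and one concludes $\norm{\mop} \ge \supnorm{\psi}$.

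For the \qte{in particular} statement, when $\psi(0) = 0$ one has $\blochnorm{\psi} = \beta_\psi$, and the Schwarz--Pick inequality applied to the bounded analytic function $\psi/\supnorm{\psi}$ gives $\beta_\psi \le \supnorm{\psi}$, so the first term in each maximum is dominated and the bound collapses to $\supnorm{\psi} \le \norm{\mop} \le \supnorm{\psi} + \sigma_\psi^\infty$. The main technical hurdle is the construction of $f_a$: one needs a unit-norm Bloch function whose derivative is maximal at a near-boundary point, arranged so that the cross term $\psi'(a)f_a(a)$ is negligible compared with $\psi(a)f_a'(a)$; the Möbius-based choice above accomplishes this, with the residual error absorbed by the little-Bloch condition on $\psi$.
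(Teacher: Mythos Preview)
Your argument is correct, and on the upper bound, the lower bound $\norm{\mop}\ge\blochnorm{\psi}$, and the ``in particular'' clause you proceed exactly as the paper does. The one genuine difference is how you obtain $\norm{\mop}\ge\supnorm{\psi}$. The paper does not compute at all here: it simply invokes Lemma~11 of Duren--Romberg--Shields, which asserts that $\supnorm{\psi}\le\norm{\mop}$ for any bounded multiplication operator on a functional Banach space. Your route is instead a self-contained test-function argument tailored to $\Bloch$: you build unit-norm functions $f_a$ whose Bloch seminorm is realized at $a$, push $a$ toward $\partial\D$ along a maximizing sequence for $|\psi|$, and kill the cross term using the fact (from Brown--Shields) that $\psi\in\lilBloch$. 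The paper's citation is shorter and more general---it needs neither the Bloch structure nor the little-Bloch condition on $\psi$---while your approach has the virtue of being elementary and of staying entirely inside the machinery already set up in the section. One small caveat: your appeal to the maximum modulus principle to force $|a_n|\to 1$ tacitly assumes $\psi$ is nonconstant; the constant case is of course trivial since then $\psi'\equiv 0$ and the correction term vanishes identically.
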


\begin{proof}  By taking $\varphi$ to be the identity map, $\tau_{\psi,\varphi}^\infty = \supnorm{\psi}$ and $\sigma_{\psi,\varphi}^\infty = \sigma_\psi^\infty$.  Thus,
\[\blochnorm{\psi} \leq \norm{M_\psi} \leq \max\left\{\blochnorm{\psi}, \supnorm{\psi}+\sigma_\psi^\infty\right\}.\]
Furthermore, we have $\supnorm{\psi} \leq \norm{\mop}$.  Indeed, this is true for a bounded multiplication operator on any \emph{functional Banach space}, see \cite{DurenRombergShields:69} Lemma 11.  Therefore, \[\max\left\{\blochnorm{\psi},\supnorm{\psi}\right\} \leq \norm{\mop} \leq \max\left\{\blochnorm{\psi}, \supnorm{\psi}+\sigma_\psi^\infty\right\},\] as desired.  The conclusion for the case $\psi(0) = 0$ follows from the fact that by the Schwarz-Pick lemma, for a bounded function $\psi$ on $\D$, \[(1-\mod{z}^2)\frac{\mod{\psi'(z)}}{\supnorm{\psi}} \leq 1-\frac{\mod{\psi(z)}^2}{\supnorm{\psi}^2}\] for all $z$ in $\D$, so that $\blochnorm{\psi} = \beta_\psi \leq \supnorm{\psi}$.
\end{proof}

\noindent\textbf{Open Question. }  If $\mop$ is bounded on $\Bloch$, is $\blochnorm{\psi} \leq \supnorm{\psi} + \sigma_\psi^\infty$?  As mentioned above, this is true if $\psi$ fixes the origin.  The inequality also holds in the case of $\psi$ being an automorphism of $\D$.  Indeed, suppose $\psi \in \Aut(\D)$ with $\psi(0) \neq 0$.  Then \[\psi(z) = \eta \frac{a-z}{1-\conj{a}z},\] where $0 \neq a \in \D$ and $\mod{\eta} = 1$.  Thus, $\supnorm{\psi} = 1$ and $\beta_\psi = (1-\mod{a}^2)\mod{\psi'(a)} = 1$, so $\blochnorm{\psi} = \mod{a} + 1$.  Furthermore
\[\begin{aligned}
\sigma_\psi^\infty &= \sup_{z \in \D}\;(1-\mod{z}^2)\mod{\psi'(z)}\frac{1}{2}\log\frac{1+\mod{z}}{1-\mod{z}}\\
&\geq (1-\mod{a}^2)\mod{\psi'(a)}\frac{1}{2}\log\frac{1+\mod{a}}{1-\mod{a}}\\
&= \frac{1}{2}\log\frac{1+\mod{a}}{1-\mod{a}} > \mod{a}
\end{aligned}\] since, for $\mod{a} < 1$, \[\frac{1}{2}\log\frac{1+\mod{a}}{1-\mod{a}} = \sum_{n=0}^\infty \frac{\mod{a}^{2n+1}}{2n+1}.\]  Therefore $\blochnorm{\psi} \leq \supnorm{\psi} + \sigma_\psi^\infty$.

\section{Characterization of Isometric Multiplication Operators}
This section is devoted to the identification of the symbols of the isometric multiplication operators on the Bloch space.  We first establish necessary conditions for $\psi$ to induce an isometric multiplication operator on $\Bloch$.

\begin{lemma}\label{super lemma} Let $\psi$ be the symbol of an isometric multiplication operator on $\Bloch$.  Then $\supnorm{\psi} \leq 1$ and $\blochnorm{\psi^n} = 1$ for all $n \in \N$.\end{lemma}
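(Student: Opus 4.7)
The plan is to extract both conclusions from two elementary features of isometries: first, any isometry on a nonzero normed space has operator norm equal to $1$; second, a finite composition of isometries is again an isometry. Together with the norm estimates already proved, these observations suffice.

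For the bound $\supnorm{\psi}\le 1$, I would observe that since $\mop$ is an isometry on $\Bloch$ we have $\norm{\mop}=1$. Corollary \ref{bounds mult op} gives the lower estimate $\supnorm{\psi}\le\norm{\mop}$, so the first conclusion follows immediately. (Equivalently, one can test $\mop$ on reproducing-style functions, but the corollary is exactly what is needed.)

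For the identity $\blochnorm{\psi^n}=1$, the key observation is that iterates of a multiplication operator are again multiplication operators: explicitly, $\mop^n f = \psi^n f$ for every $f\in\Bloch$, so $M_{\psi^n}=\mop^n$. As a finite composition of isometries, this operator is itself an isometry on $\Bloch$. Evaluating it on the constant function $f\equiv 1$, which lies in $\Bloch$ with $\blochnorm{1}=1$, yields
$$\blochnorm{\psi^n}=\blochnorm{M_{\psi^n}\cdot 1}=\blochnorm{1}=1$$
for every $n\in\N$, which is the second conclusion (and recovers $\blochnorm{\psi}=1$ as the case $n=1$).

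I do not anticipate any serious obstacle here; the proof is essentially bookkeeping. The only point worth flagging is that $M_{\psi^n}$ must be a well-defined bounded operator on $\Bloch$ for the isometry property to be meaningful, but this is automatic since $M_{\psi^n}=\mop^n$ is a composition of bounded operators. In particular, no direct control of $\psi^n$ via the Brown--Shields criterion is required.
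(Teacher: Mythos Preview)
Your proof is correct and follows essentially the same approach as the paper: the bound $\supnorm{\psi}\le 1$ is obtained identically via Corollary~\ref{bounds mult op}, and for $\blochnorm{\psi^n}=1$ the paper tests on the constant $1$ and then inducts via $\blochnorm{\psi^{n}}=\blochnorm{M_\psi(\psi^{n-1})}=\blochnorm{\psi^{n-1}}$, which is just an unrolled version of your observation that $M_{\psi^n}=\mop^n$ is a composition of isometries.
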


\begin{proof} By the lower estimate in Corollary \ref{bounds mult op} we obtain $\supnorm{\psi} \leq \norm{M_\psi} = 1.$  By choosing the test function $g \equiv 1$, we have \[1 = \blochnorm{g} = \blochnorm{\mop g} = \blochnorm{\psi}.\]  Thus
\[\blochnorm{\psi^2} = \blochnorm{M_\psi(\psi)} = \blochnorm{\psi} = 1.\]  The conclusion follows by induction.\end{proof}

The next result is more general, and with it, we can prove the main theorem.

\begin{lemma}\label{power norm lemma} If $\psi \in H^\infty(\D)$ such that $\supnorm{\psi} \leq 1$ and $\psi(0) = 0$, then $\blochnorm{\psi^n} < 1$ for all $n \geq 2$.\end{lemma}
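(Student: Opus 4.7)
The plan is to reduce the claim to a one-variable calculus inequality via the Schwarz-Pick lemma. Since $\psi(0)=0$ forces $\psi^n(0)=0$, we have $\blochnorm{\psi^n} = \beta_{\psi^n}$, so it suffices to show $\beta_{\psi^n} < 1$. If $\psi \equiv 0$ the claim is trivial, so assume $\psi$ is not identically zero; then $\mod{\psi(z)}<1$ for all $z \in \D$ by the maximum modulus principle, and $\psi$ is a genuine self-map of $\D$.

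First, I would write $(\psi^n)'(z) = n\psi(z)^{n-1}\psi'(z)$ and apply the Schwarz-Pick lemma to the self-map $\psi$ of $\D$ to obtain
$$(1-\mod{z}^2)\mod{(\psi^n)'(z)} \leq n\mod{\psi(z)}^{n-1}\left(1-\mod{\psi(z)}^2\right).$$
Setting $r = \mod{\psi(z)} \in [0,1)$ and taking the supremum over $z \in \D$ then gives
$$\beta_{\psi^n} \leq M_n := \sup_{r \in [0,1)} nr^{n-1}(1-r^2).$$

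Second, elementary calculus on $h(r) = nr^{n-1}(1-r^2)$ locates the unique critical point in $(0,1)$ at $r_n = \sqrt{(n-1)/(n+1)}$, yielding
$$M_n^2 = \frac{4n^2(n-1)^{n-1}}{(n+1)^{n+1}}.$$
The remaining task is to verify $M_n<1$ for every $n\geq 2$, equivalently
$$\left(1+\frac{2}{n-1}\right)^{n-1} > \frac{4n^2}{(n+1)^2}.$$
For $n=2$ this reads $3>16/9$. For $n\geq 3$, keeping only the first three (positive) terms of the binomial expansion gives
$$\left(1+\frac{2}{n-1}\right)^{n-1} \geq 1 + 2 + \binom{n-1}{2}\left(\frac{2}{n-1}\right)^2 = 3 + \frac{2(n-2)}{n-1} \geq 4,$$
while $4n^2/(n+1)^2 < 4$ for every $n \geq 1$. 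This delivers $M_n<1$ uniformly in $n \geq 2$.

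The main obstacle is packaging the calculus step cleanly and uniformly: a plain Bernoulli inequality only gives $\left(1+2/(n-1)\right)^{n-1}\geq 3$, which is just shy of what is needed, so the quadratic term of the binomial expansion has to be retained to push past the $4n^2/(n+1)^2$ barrier. Once $M_n<1$ is established, the chain $\beta_{\psi^n}\leq M_n<1$ is immediate and concludes the proof.
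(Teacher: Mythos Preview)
Your proof is correct and follows essentially the same route as the paper: both reduce via Schwarz--Pick to bounding $M_n = \dfrac{2n}{n+1}\left(\dfrac{n-1}{n+1}\right)^{(n-1)/2}$ and then verify $M_n<1$ by an elementary inequality. The only cosmetic difference is in that last step---the paper uses the convexity-type bound $(x+a)^m > x^m + amx^{m-1}$ with $x=n-1$, $a=2$, $m=(n+1)/2$, whereas you keep three terms of the binomial expansion of $\bigl(1+\tfrac{2}{n-1}\bigr)^{n-1}$; both yield the same conclusion.
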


\begin{proof}
By the Schwarz-Pick lemma, for all $n \in \N$, $n \geq 2$, we have
\[\begin{aligned}
\beta_{\psi^n} &= \sup_{z \in \D}\;(1-\mod{z}^2)n\mod{\psi(z)}^{n-1}\mod{\psi'(z)}\\
&\leq \sup_{z \in \D}\;n(1-\mod{\psi(z)}^2)\mod{\psi(z)}^{n-1}\\
&\leq n\max_{x \in [0,1]} (x^{n-1}-x^{n+1})\\
&= \frac{2n}{n+1}\left(\frac{n-1}{n+1}\right)^{\frac{n-1}{2}} = \frac{2n}{n-1}\left(\frac{n-1}{n+1}\right)^{\frac{n+1}{2}}.
\end{aligned}\]  For $x$ and $a$ positive real numbers and $m$ a real number greater than 1, we have $(x+a)^m > x^m + amx^{m-1}$.  By this, we have for $n \geq 2$,
\[(n+1)^{\frac{n+1}{2}} > (n-1)^{\frac{n+1}{2}} + 2\left(\frac{n+1}{2}\right)(n-1)^{\frac{n-1}{2}} = 2n(n-1)^{\frac{n-1}{2}}.\]  So \[\frac{2n}{n+1}\left(\frac{n-1}{n+1}\right)^{\frac{n-1}{2}} = \frac{2n(n-1)^{\frac{n-1}{2}}}{(n+1)^{\frac{n+1}{2}}} < 1.\]  Hence $\blochnorm{\psi^n} = \beta_{\psi^n} < 1$ for $n \geq 2$.
\end{proof}

\begin{corollary}\label{fix origin corollary} If $\psi$ is the symbol of an isometric multiplication operator on $\Bloch$, then $\psi$ does not fix the origin.\end{corollary}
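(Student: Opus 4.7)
The plan is to combine the two lemmas just established via a direct proof by contradiction. Both lemmas have been set up exactly so that this corollary drops out without further computation; Lemma \ref{super lemma} pins down what must be true of the iterates $\psi^n$ of an isometric multiplier, while Lemma \ref{power norm lemma} says that hypothesis is incompatible with $\psi$ fixing the origin.

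Concretely, suppose toward a contradiction that $\psi$ is the symbol of an isometric multiplication operator on $\Bloch$ and that $\psi(0)=0$. First, apply Lemma \ref{super lemma}: this gives $\supnorm{\psi}\leq 1$, so in particular $\psi\in H^\infty(\D)$, and also $\blochnorm{\psi^n}=1$ for every $n\in\N$. Next, since $\psi\in H^\infty(\D)$ with $\supnorm{\psi}\leq 1$ and the assumption $\psi(0)=0$, the hypotheses of Lemma \ref{power norm lemma} are satisfied; hence $\blochnorm{\psi^n}<1$ for every $n\geq 2$. Taking, for instance, $n=2$ yields $\blochnorm{\psi^2}<1$ and simultaneously $\blochnorm{\psi^2}=1$, a contradiction. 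Therefore no isometric multiplier can fix the origin.

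There is no real obstacle here: the genuine content lives inside the two preceding lemmas (particularly the Schwarz--Pick estimate on $(1-|\psi|^2)|\psi|^{n-1}$ and the elementary inequality $(n+1)^{(n+1)/2}>2n(n-1)^{(n-1)/2}$ used in Lemma \ref{power norm lemma}), and this corollary is simply the bookkeeping step that combines them. The only thing to verify carefully when writing it out is that Lemma \ref{super lemma} does supply $\psi\in H^\infty(\D)$, which is exactly the role of the bound $\supnorm{\psi}\leq 1$ extracted from $\norm{\mop}=1$ and Corollary \ref{bounds mult op}.
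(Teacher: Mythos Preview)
Your proof is correct and follows essentially the same approach as the paper: assume $\psi(0)=0$, invoke Lemma \ref{super lemma} for $\supnorm{\psi}\le 1$ and $\blochnorm{\psi^2}=1$, then derive a contradiction from Lemma \ref{power norm lemma}. The only difference is that you spell out explicitly that $\supnorm{\psi}\le 1$ guarantees $\psi\in H^\infty(\D)$, which the paper leaves implicit.
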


\begin{proof} Arguing by contradiction, assume $\psi(0) = 0$.  By Lemma \ref{super lemma}, $\supnorm{\psi} \leq 1$ and $\blochnorm{\psi^2} = 1$.  However, $\blochnorm{\psi^2} < 1$ by Lemma \ref{power norm lemma}.
\end{proof}

\begin{lemma}\label{zeros lemma} Suppose $\psi \in H^\infty(\D)$ such that $\supnorm{\psi} \leq 1$ and the map $g(z) = z\psi(z)$ has Bloch norm 1.  Then either $\psi$ is a constant of modulus 1, or $\psi$ has infinitely many zeros $\{a_n\}$ in $\D$ such that \[\beta_\psi = \limsup_{n \to \infty}\; (1-\mod{a_n}^2)\mod{\psi'(a_n)} = 1.\]  In the latter case, if $\blochnorm{\psi} = 1$, then $\psi(0) = 0$.\end{lemma}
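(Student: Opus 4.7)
Since $g(0)=0$ and $g'(z)=\psi(z)+z\psi'(z)$, the hypothesis becomes $\sup_{z\in\D}(1-|z|^2)|\psi(z)+z\psi'(z)| = 1$. The plan is to combine the triangle inequality with the Schwarz--Pick estimate $(1-|z|^2)|\psi'(z)| \leq 1-|\psi(z)|^2$ (valid because $\supnorm{\psi}\leq 1$) to derive the pointwise bound
\[
(1-|z|^2)|g'(z)| \;\leq\; (1-|z|^2)|\psi(z)| + |z|(1-|\psi(z)|^2) \;=\; (|z|+|\psi(z)|)\bigl(1-|z|\,|\psi(z)|\bigr).
\]
A short elementary calculation shows $(r+s)(1-rs)\leq 1$ on $[0,1]^2$, with equality only when $\{r,s\}=\{0,1\}$. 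Choosing $z_n\in\D$ with $(1-|z_n|^2)|g'(z_n)|\to 1$ and passing to a subsequence along which $(|z_n|,|\psi(z_n)|)\to(r_0,s_0)$, compactness forces $(r_0,s_0)\in\{(0,1),(1,0)\}$.

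If $(r_0,s_0)=(0,1)$, then $|\psi(0)|=1$ by continuity, and the maximum modulus principle together with $\supnorm{\psi}\leq 1$ forces $\psi$ to be a constant of modulus $1$. Otherwise $|z_n|\to 1$ and $\psi(z_n)\to 0$; since the first summand in the bound tends to $0$, the chain of inequalities forces $(1-|z_n|^2)|\psi'(z_n)|\to 1$, and in particular $\beta_\psi=1$. To locate zeros of $\psi$, set $\varphi_n(w)=(w+z_n)/(1+\overline{z_n}w)$ and $f_n=\psi\circ\varphi_n$. The family $\{f_n\}$ is bounded by $1$, hence normal; passing to a subsequence so that $f_n\to f$ locally uniformly on $\D$ and $(1-|z_n|^2)\psi'(z_n)\to e^{i\theta}$, we have $f(0)=0$ and $|f'(0)|=1$, so Schwarz's lemma yields $f(w)=e^{i\theta}w$. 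Hurwitz's theorem now provides, for each large $n$, a zero $\zeta_n$ of $f_n$ with $\zeta_n\to 0$, and the distinct points $a_n:=\varphi_n(\zeta_n)$ are zeros of $\psi$ with $|a_n|\to 1$.

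A direct chain-rule computation gives the Möbius-covariance identity $(1-|a_n|^2)|\psi'(a_n)| = (1-|\zeta_n|^2)|f_n'(\zeta_n)|$, whose right side tends to $|f'(0)|=1$ since $f_n'\to f'$ locally uniformly and $\zeta_n\to 0$. Combined with the Schwarz--Pick bound $(1-|a_n|^2)|\psi'(a_n)|\leq 1$ at each zero, this yields $\limsup_n(1-|a_n|^2)|\psi'(a_n)|=1$. Finally, if $\blochnorm{\psi}=1$, then $|\psi(0)|+\beta_\psi=1$ with $\beta_\psi=1$, so $\psi(0)=0$. The main obstacle is this passage from a sup-attaining sequence $\{z_n\}$ to actual zeros $\{a_n\}$ carrying the same limsup behavior, which requires the normal families / Schwarz / Hurwitz machinery together with the Möbius covariance of the quantity $(1-|z|^2)|\psi'(z)|$.
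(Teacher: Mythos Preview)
Your argument is correct. The one step you leave implicit is that the zeros $a_n$ are genuinely infinite in number; this follows because $|a_n - z_n| = |\zeta_n|(1-|z_n|^2)/|1+\overline{z_n}\zeta_n| \to 0$ and $|z_n|\to 1$, so $|a_n|\to 1$ and no finite set of points in $\D$ can account for all of them.

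Your route, however, is quite different from the paper's. The paper applies Theorem~3 of \cite{Colonna:05} directly to the self-map $g(z)=z\psi(z)$: that theorem says any $g:\D\to\D$ with $g(0)=0$ and $\beta_g=1$ is either a rotation or has infinitely many zeros $\{a_n\}$ with $\limsup_n (1-|a_n|^2)|g'(a_n)|=1$. Since a nonzero zero of $g$ is a zero of $\psi$, and $g'(a_n)=a_n\psi'(a_n)$ at such points, the conclusion for $\psi$ drops out in two lines. What you have done is essentially reprove that cited theorem from scratch in the special case at hand, via the Schwarz--Pick bound, the elementary inequality $(r+s)(1-rs)\le 1$, normal families, Schwarz's lemma, and Hurwitz. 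The paper's approach is much shorter but defers all the work to an external reference; yours is self-contained and makes the mechanism (extremal sequence, normal-family limit forced to be a rotation, Hurwitz to locate zeros) fully explicit. Both reach the last sentence identically, reading off $\psi(0)=0$ from $|\psi(0)|+\beta_\psi=1$ and $\beta_\psi=1$.
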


\begin{proof}  Assume $\psi$ is not a constant of modulus 1.  Note that the function $g$ maps $\D$ into itself, fixes the origin, and has Bloch norm 1.  Then by Theorem 3 of \cite{Colonna:05}, there exists an infinite sequence $\{a_n\}$ in $\D$ such that $\psi(a_n) = 0$ and \[\limsup_{n \to \infty}\;(1-\mod{a_n}^2)\mod{g'(a_n)} = 1.\]  Since $g'(a_n) = \psi(a_n) + a_n\psi'(a_n) = a_n\psi'(a_n)$, we deduce \[\limsup_{n \to \infty}\;(1-\mod{a_n}^2)\mod{a_n}\mod{\psi'(a_n)} = 1.\]  By assumption $\psi$ is not a constant function, and so $\mod{a_n} \to 1$ as $n \to \infty$.   Thus  \[\beta_\psi = \limsup_{n \to \infty}\;(1-\mod{a_n}^2)\mod{\psi'(a_n)} = 1.\]  The conclusion for the case $\blochnorm{\psi} = 1$ follows at once.
\end{proof}

We now prove the main theorem of this section.

\begin{theorem}\label{characterization theorem} The multiplication operator $M_\psi$ is an isometry on $\Bloch$ if and only if $\psi$ is a constant function of modulus 1.\end{theorem}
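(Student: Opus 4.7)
The forward (easy) direction is a direct check: if $\psi \equiv \eta$ with $|\eta|=1$, then for any $f \in \Bloch$, $(\mop f)(0) = \eta f(0)$ and $(\mop f)'(z) = \eta f'(z)$, so $\blochnorm{\mop f} = |\eta f(0)| + \sup_z (1-|z|^2)|\eta||f'(z)| = \blochnorm{f}$. The whole content of the theorem is therefore the reverse direction, and my plan is to combine Lemma \ref{super lemma}, Lemma \ref{zeros lemma}, and Corollary \ref{fix origin corollary} via a single well-chosen test function.

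\textbf{Converse.} Assume $\mop$ is an isometry on $\Bloch$. By Lemma \ref{super lemma}, $\supnorm{\psi}\le 1$ and $\blochnorm{\psi} = 1$. Now apply the isometry to the specific test function $f(z) = z$. Since $f(0)=0$ and $|f'(z)|\equiv 1$, we have $\blochnorm{f}=1$, hence
\[
\blochnorm{\mop f} = \blochnorm{z\psi(z)} = 1.
\]
Setting $g(z) = z\psi(z)$, we are precisely in the situation of Lemma \ref{zeros lemma}: $\psi \in H^\infty(\D)$, $\supnorm{\psi}\le 1$, and $g$ has Bloch norm $1$. The lemma gives a dichotomy: either $\psi$ is a constant of modulus $1$ (and we are done), or $\psi$ has infinitely many zeros in $\D$ and $\beta_\psi = 1$. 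In the latter case, combined with $\blochnorm{\psi}=1$, the final clause of Lemma \ref{zeros lemma} forces $\psi(0) = 0$. But this directly contradicts Corollary \ref{fix origin corollary}, which says the symbol of an isometric multiplication operator cannot fix the origin. So the latter case is impossible, and $\psi$ must be a constant of modulus $1$.

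\textbf{Main obstacle.} All the heavy lifting has already been done in the preceding lemmas, so there is no real obstacle left; the proof is essentially an assembly argument. The only thing one has to spot is the correct test function. The natural first guesses $f\equiv 1$ (used in Lemma \ref{super lemma}) and $f(z)=z$ do different jobs: the constant function gives $\blochnorm{\psi}=1$, while $f(z)=z$ turns the isometry condition into the hypothesis $\blochnorm{z\psi(z)}=1$ of Lemma \ref{zeros lemma}. Once both are invoked, the contradiction with Corollary \ref{fix origin corollary} closes the argument.
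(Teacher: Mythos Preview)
Your proof is correct and follows essentially the same approach as the paper: both invoke Lemma \ref{super lemma} to get $\supnorm{\psi}\le 1$ and $\blochnorm{\psi}=1$, then apply the isometry to the test function $f(z)=z$ to obtain $\blochnorm{z\psi(z)}=1$, and finally combine Lemma \ref{zeros lemma} with Corollary \ref{fix origin corollary} to force a contradiction unless $\psi$ is a unimodular constant. The only cosmetic difference is that the paper phrases the converse as a proof by contradiction (assuming $\psi$ is not a constant of modulus $1$), whereas you phrase it as a dichotomy coming out of Lemma \ref{zeros lemma}; the logic is identical.
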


\begin{proof}  Clearly, if $\psi$ is a constant function of modulus 1 then $M_\psi$ is an isometry on $\Bloch$.  Conversely, suppose $M_\psi$ is an isometry on $\Bloch$ and assume $\psi$ is not a constant function of modulus 1.  Then by Lemma \ref{super lemma}, $\supnorm{\psi} \leq 1$ and $\blochnorm{\psi} = 1$.  Also, for $g(z) = z\psi(z)$, $\blochnorm{g} = \blochnorm{M_\psi(\id)} = \blochnorm{\id} = 1$, where $\id$ is the identity map on $\D$.  Then by Lemma \ref{zeros lemma}, $\psi(0) = 0$, contradicting Corollary \ref{fix origin corollary}.  Therefore, if $M_\psi$ is an isometry on $\Bloch$, then $\psi$ must be a constant function of modulus 1.
\end{proof}

\section{Characterization of the Spectra of Multiplication Operators}
We now turn our attention to the spectrum of a multiplication operator.  Recall that the \emph{resolvent} of a bounded linear operator $T$ on a complex
Banach space $E$ is defined as
\[\rho(T) = \{\lambda \in \C : T-\lambda I \text{ is invertible}\},\] where $I$ is the identity operator.  The \emph{spectrum} of $T$ is defined as $\sigma(T) =
\C\setminus\rho(T)$.  The \emph{approximate point spectrum}, a
subset of the spectrum, is defined as
\[\sigma_{ap}(T) = \{\lambda \in \C : T-\lambda I \text{ is not
bounded below}\},\] i.e. for every $M>0$, there exists $x \in E$ such that $\norm{Tx} < M\norm{x}$.

The spectrum is a non-empty compact subset of the closed disk centered at the origin of radius $\norm{T}$.  In particular, the spectrum of an isometry is contained in $\overline{\D}$.  Furthermore the boundary $\partial\sigma(T)$ of $\sigma(T)$ is a subset of $\sigma_{ap}(T)$ (see \cite{Conway:90}, Proposition~6.7).

\begin{theorem}\label{spectrum M_psi} Let $\psi$ be the symbol of a bounded multiplication operator $\mop$ on $\Bloch$.  Then $\sigma(\mop) = \overline{\psi(\D)}$.\end{theorem}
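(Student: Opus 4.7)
The plan is to prove the two inclusions $\overline{\psi(\D)} \subseteq \sigma(\mop)$ and $\sigma(\mop) \subseteq \overline{\psi(\D)}$ separately. Both rest on elementary features of functional Banach spaces and the Brown--Shields characterization of Bloch multipliers.

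For the inclusion $\overline{\psi(\D)} \subseteq \sigma(\mop)$, I would first observe that if $\lambda = \psi(z_0)$ for some $z_0 \in \D$, then for every $f \in \Bloch$, $((\mop - \lambda I)f)(z_0) = (\psi(z_0) - \lambda)f(z_0) = 0$. Hence the range of $\mop - \lambda I$ is contained in the proper closed subspace $\{g \in \Bloch : g(z_0) = 0\}$ (which does not contain the constant function $1$), so $\mop - \lambda I$ is not surjective and $\lambda \in \sigma(\mop)$. This shows $\psi(\D) \subseteq \sigma(\mop)$, and since the spectrum is closed we get $\overline{\psi(\D)} \subseteq \sigma(\mop)$.

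For the reverse inclusion $\sigma(\mop) \subseteq \overline{\psi(\D)}$, I would take $\lambda \notin \overline{\psi(\D)}$ and show $\mop - \lambda I = \mulop{\psi - \lambda}$ is invertible by exhibiting its inverse as another multiplication operator. Since $\overline{\psi(\D)}$ is closed and $\lambda$ is outside it, there exists $\delta > 0$ with $\mod{\psi(z) - \lambda} \geq \delta$ for all $z \in \D$. Thus $g \defeq 1/(\psi - \lambda)$ is analytic on $\D$ and $\supnorm{g} \leq 1/\delta$. The key verification is that $\mulop{g}$ is bounded on $\Bloch$; once that is established, $\mulop{g}$ is clearly a two-sided inverse for $\mulop{\psi - \lambda}$, placing $\lambda$ in $\rho(\mop)$.

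To check that $g$ is a Bloch multiplier, I would invoke the Brown--Shields criterion quoted in the introduction. Since $\mop$ is bounded, $\psi$ satisfies $\mod{\psi'(z)} = O\!\left(1/\bigl((1-\mod{z})\log\frac{1}{1-\mod{z}}\bigr)\right)$. Computing
$$\mod{g'(z)} = \frac{\mod{\psi'(z)}}{\mod{\psi(z) - \lambda}^2} \leq \frac{\mod{\psi'(z)}}{\delta^2},$$
we see $g$ inherits the same growth estimate, and combined with $g \in H^\infty(\D)$ the Brown--Shields theorem gives boundedness of $\mulop{g}$ on $\Bloch$. I expect this step, transferring the multiplier condition from $\psi$ to $1/(\psi - \lambda)$ via Brown--Shields, to be the main technical point, though it is actually immediate once one writes down the derivative. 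Combining the two inclusions yields $\sigma(\mop) = \overline{\psi(\D)}$.
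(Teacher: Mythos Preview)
Your proposal is correct and follows essentially the same route as the paper: both inclusions are proved separately, and the reverse inclusion is handled identically by checking via the Brown--Shields criterion that $1/(\psi-\lambda)$ is a Bloch multiplier. For the forward inclusion you argue non-surjectivity directly (the range of $M_{\psi-\lambda}$ lies in the kernel of point evaluation at $z_0$), whereas the paper instead notes that the putative inverse $M_{1/(\psi-\lambda)}$ is ill-defined because of the pole at $z_0$; the difference is cosmetic, and your formulation is arguably cleaner.
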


\begin{proof}  For $\lambda \in \C$, the operator $M_\psi - \lambda I$ can be rewritten as $M_{\psi - \lambda}$.  Thus $\lambda \in \sigma(M_\psi)$ if and only if $M_{\psi-\lambda}$ is not invertible.  Clearly, if $M_{\psi-\lambda}^{-1}$ exists, it is the multiplication operator $M_{(\psi-\lambda)^{-1}}$.

Let $\lambda \in \psi(\D)$.  Then there exists $z_0 \in \D$ such that $\psi(z_0) = \lambda$.  So $(\psi-\lambda)^{-1}$ has a pole at $z_0$, which means $M_{(\psi-\lambda)^{-1}}$ is not a well-defined operator.  Thus $M_{\psi-\lambda}$ is not invertible.  This implies that $\overline{\psi(\D)} \subseteq \sigma(M_\psi)$.

Suppose $\lambda \not\in \overline{\psi(\D)}$.  Then $\mod{\psi-\lambda}$ is bounded away from 0 by some positive constant $c$.  Thus the modulus of the function $g(z) = \frac{1}{\psi(z)-\lambda}$ is in $H^\infty(\D)$.  In addition, \[\mod{g'(z)} = \frac{\mod{\psi'(z)}}{\mod{\psi(z)-\lambda}^2} \leq \frac{1}{c^2} \mod{\psi'(z)} = O\left(\frac{1}{(1-\mod{z})\log\frac{1}{1-\mod{z}}}\right).\]  So $M_g = M_{(\psi-\lambda)^{-1}}$ is a bounded operator on $\Bloch$.  Thus $\lambda \not\in \sigma(M_\psi)$.  Therefore $\sigma(M_\psi) = \overline{\psi(\D)}$.
\end{proof}

This result is not surprising since it also holds for the space of continuous, real-valued functions on a closed interval.  A similar result holds for $L^2(\mu)$, $\mu$ a probability measure.  Specifically, for $\psi \in L^\infty(\mu)$, the spectrum of $M_\psi$ on $L^2(\mu)$ is the \emph{essential range} of $\psi$, that is the set of $\lambda \in \C$ such that the preimage under $\psi$ of every neighborhood of $\lambda$ has positive measure \cite{Douglas:98}.  As an immediate consequence of Theorems \ref{characterization theorem} and \ref{spectrum M_psi}, we obtain the following result.

\begin{corollary}\label{iso spectrum characterization corollary} Let $M_\psi$ be an isometric multiplication operator on the Bloch space.  Then $\sigma(M_\psi) = \{\eta\},$ where $\eta$ is the unimodular constant value of $\psi$.\end{corollary}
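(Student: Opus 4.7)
The plan is to obtain this corollary as an immediate consequence of the two main results already established in Sections 3 and 4. The proof essentially collapses to a one-line syllogism, so my main task is just to verify that no additional hypothesis is smuggled in when chaining the two theorems together.

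First I would invoke Theorem~\ref{characterization theorem}, which asserts that if $\mop$ is isometric on $\Bloch$, then $\psi$ is necessarily a constant function of modulus $1$. I would record that constant as $\eta$, so that $\psi(z) = \eta$ for every $z \in \D$ and $\mod{\eta} = 1$. I would then note that $\mop$ is in particular bounded (indeed $\norm{\mop} = 1$), which is the only hypothesis required for the next step.

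Next I would apply Theorem~\ref{spectrum M_psi}, which identifies the spectrum of any bounded multiplication operator on $\Bloch$ with the closure of the range of its symbol. Substituting the constant symbol yields
$$\sigma(\mop) = \overline{\psi(\D)} = \overline{\{\eta\}} = \{\eta\},$$
which is exactly the claimed conclusion.

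Since the argument is a direct combination of two prior theorems, there is no real obstacle and no routine calculation to grind through. As an optional sanity check one can note the concrete picture: for any $\lambda \in \C$ the operator $\mop - \lambda I$ equals $M_{\eta - \lambda}$, which fails to be invertible precisely when $\lambda = \eta$ and otherwise has inverse given by multiplication by the scalar $(\eta - \lambda)^{-1}$. This reproduces the spectrum $\{\eta\}$ independently of Theorem~\ref{spectrum M_psi} and confirms the result.
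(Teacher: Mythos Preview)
Your proposal is correct and matches the paper's approach exactly: the paper presents this corollary as an immediate consequence of Theorems~\ref{characterization theorem} and~\ref{spectrum M_psi}, which is precisely the chain of implications you give. The optional sanity check you append is sound and not in the paper, but it is harmless and consistent with the argument.
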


\section{A Further Glimpse at Weighted Composition Operators}
Our focus returns to weighted composition operators, and isometries amongst them.  Let $\mathcal{I}_W$ be the set of weighted composition operators $\wco$ such that $\psi$ induces an isometric multiplication operator and $\varphi$ induces an isometric composition operator.  Clearly the set of isometric weighted composition operators contains $\mathcal{I}_W$.

\begin{observation}\label{iso comp op obs} In Theorem 2 of \cite{Colonna:05}, it was shown that $\varphi$ induces an isometric composition operator on $\Bloch$ if and only if $\varphi(0) = 0$ and $\beta_\varphi = 1$.  In particular, Corollary 2 of \cite{Colonna:05} proved that $\varphi$ is either a rotation or the zero set of $\varphi$ forms an infinite sequence $\{a_n\}$ in $\D$ such that \[\limsup_{n \to \infty}\;(1-\mod{a_n}^2)\mod{\varphi'(a_n)} = 1.\]  Notice that the class $\mathfrak{I}$ of such functions $\varphi$ is very large.  Indeed, $\varphi \in \mathfrak{I}$ if and only if $\varphi = gB$ where $g$ is a non-vanishing analytic function of $\D$ into $\overline{\D}$, and $B$ is a Blaschke product whose zeros form an infinite sequence $\{a_n\}$ containing 0 and an infinite subsequence $\{a_{n_j}\}$ such that $\mod{g(a_{n_j})} \to 1$ and \begin{equation}\label{blaschke iso property}\lim_{j \to \infty} \prod_{k \neq n_j} \mod{\frac{a_{n_j}-a_k}{1-\conj{a_{n_j}}a_k}} = 1.\end{equation}  The Blaschke products whose zeros satisfy (\ref{blaschke iso property}) include the \emph{thin} Blaschke products.  For more on this topic, see \cite{Hedenmalm:87} and \cite{GorkinMortini:04}.\end{observation}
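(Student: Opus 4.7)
\emph{The approach.} The characterization will be derived by combining the Colonna 2005 isometry criterion (Theorem 2 of \cite{Colonna:05}: $\varphi \in \mathfrak{I}$ iff $\varphi(0) = 0$ and $\beta_\varphi = 1$) with the Blaschke factorization of a bounded analytic function. Rotations form a trivial degenerate case and should be treated separately; the substantive content concerns non-rotation $\varphi$ having infinitely many zeros.

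\emph{Forward direction.} Suppose $\varphi \in \mathfrak{I}$ is not a rotation. By Corollary 2 of \cite{Colonna:05}, the zero set $\{a_n\}$ of $\varphi$ is infinite and satisfies $\limsup_n (1-\mod{a_n}^2)\mod{\varphi'(a_n)} = 1$. Since $\varphi$ is bounded, $\sum(1-\mod{a_n}) < \infty$, so I can form the Blaschke product $B$ with zero set $\{a_n\}$. Because $\varphi(0)=0$, the sequence contains $0$. Setting $g = \varphi/B$, one verifies that $g$ is analytic on $\D$, non-vanishing, and satisfies $\mod{g} \leq 1$ via the maximum modulus principle, using $\mod{B^*} = 1$ almost everywhere on $\partial\D$. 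Differentiating $\varphi = gB$ at $a_n$ gives $\varphi'(a_n) = g(a_n) B'(a_n)$, and combining with the classical identity $(1-\mod{a_n}^2)\mod{B'(a_n)} = \prod_{k \neq n} \mod{\frac{a_n - a_k}{1 - \conj{a_n} a_k}}$ yields
$$(1-\mod{a_n}^2)\mod{\varphi'(a_n)} = \mod{g(a_n)} \prod_{k \neq n} \mod{\frac{a_n - a_k}{1 - \conj{a_n} a_k}}.$$
Since $\mod{g(a_n)} \leq 1$ and each pseudohyperbolic factor lies in $[0,1]$, the hypothesis $\limsup = 1$ forces an infinite subsequence $\{a_{n_j}\}$ along which simultaneously $\mod{g(a_{n_j})} \to 1$ and the product tends to $1$, which is precisely condition (\ref{blaschke iso property}).

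\emph{Converse and principal obstacle.} Conversely, given a factorization $\varphi = gB$ with the described properties, $\varphi(0) = 0$ because $0$ is a zero of $B$, and $\mod{\varphi} \leq \mod{g} \leq 1$ on $\D$; by Schwarz--Pick, $\beta_\varphi \leq 1$. The same derivative computation, evaluated on $\{a_{n_j}\}$, delivers $(1-\mod{a_{n_j}}^2)\mod{\varphi'(a_{n_j})} \to 1$, so $\beta_\varphi = 1$, and Theorem 2 of \cite{Colonna:05} places $\varphi$ in $\mathfrak{I}$. The main obstacle I foresee is the verification in the forward direction that $g = \varphi/B$ is genuinely analytic and contractive: this rests on the standard Riesz/Nevanlinna argument, handling the non-tangential boundary values and extending $g$ across the zeros of $B$, and is slightly subtle because $\varphi$ is only a self-map of $\D$ rather than an inner function. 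Once that step is secured, the remainder is bookkeeping around the classical formula for $\mod{B'}$ at its zeros.
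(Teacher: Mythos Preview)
The paper does not supply a proof of this observation; it is stated as a remark, with the substantive claims attributed to Theorem~2 and Corollary~2 of \cite{Colonna:05} and the factorization left implicit. Your proposal therefore cannot be compared against a proof in the paper, but it does correctly reconstruct the argument the observation is gesturing at.

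Your reasoning is sound in both directions. The forward step is exactly the Riesz--Blaschke factorization of a bounded analytic function, combined with the classical identity $(1-\mod{a_n}^2)\mod{B'(a_n)} = \prod_{k\neq n}\mod{(a_n-a_k)/(1-\conj{a_n}a_k)}$ and the observation that a product of two $[0,1]$-valued factors can have $\limsup$ equal to $1$ only if both factors tend to $1$ along a common subsequence. The converse is equally direct. The obstacle you flag---that $g=\varphi/B$ is analytic and maps into $\overline{\D}$---is not a genuine difficulty: it is the standard $H^\infty$ factorization argument (divide by finite Blaschke partial products $B_N$, apply the maximum principle on circles $\mod{z}=r$ as $r\to 1$, then let $N\to\infty$), and does not depend on $\varphi$ being inner.

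One small point worth noting: as written, the observation's ``if and only if'' characterization via $\varphi=gB$ with an \emph{infinite} zero sequence literally excludes rotations, which belong to $\mathfrak{I}$ but have only the single zero at the origin. You were right to set rotations aside as a degenerate case; the paper's phrasing is slightly loose here, and your treatment is the correct reading.
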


\noindent \textbf{Open Question. }  Are there any isometric weighted composition operators whose symbols do not induce isometric multiplication and composition operators individually?

If the answer to the above question is negative, then $\mathcal{I}_W$ is exactly the set of isometric weighted composition operators on the Bloch space and we have a complete characterization of their symbols.

At this time, we describe the spectrum of isometric composition operators, which with Corollary \ref{iso spectrum characterization corollary} will be used to describe the spectrum of the elements of $\mathcal{I}_W$.  To do so, we need an important result about the spectrum of an isometry on a general Banach space.  The following proposition is found (in a slightly different form) as an exercise in \cite{Conway:90}, and we provide a proof for completeness.

\begin{proposition}\label{conway proposition} Let $E$ be a complex Banach space and suppose $T:E \to E$ is an isometry.  If $T$ is invertible, then $\sigma(T) \subseteq \partial\D$.  If $T$ is not invertible, then $\sigma(T) = \overline{\D}$.
\end{proposition}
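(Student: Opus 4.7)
The plan is to treat the invertible and non-invertible cases separately, exploiting in both the basic fact that an isometry is bounded below.

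In the invertible case, the first observation is that the inverse of a bijective isometry is itself an isometry, since for any $y = Tx$ one has $\norm{T^{-1}y} = \norm{x} = \norm{Tx} = \norm{y}$. Hence $\norm{T} = \norm{T^{-1}} = 1$, which forces $\sigma(T) \subseteq \overline{\D}$ and $\sigma(T^{-1}) \subseteq \overline{\D}$. The spectral mapping identity $\sigma(T^{-1}) = \{\lambda^{-1} : \lambda \in \sigma(T)\}$---itself an immediate consequence of the factorization $T - \lambda I = -\lambda T (T^{-1} - \lambda^{-1} I)$ valid for $\lambda \neq 0$---then forces $\mod{\lambda} \geq 1$ for every $\lambda \in \sigma(T)$, so $\sigma(T) \subseteq \partial \D$.

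For the non-invertible case, I would first show that $\sigma_{ap}(T) \subseteq \partial \D$ via the elementary estimate
$$\norm{(T - \lambda I)x} \geq \norm{Tx} - \mod{\lambda}\,\norm{x} = (1 - \mod{\lambda})\norm{x},$$
which says that $T - \lambda I$ is bounded below whenever $\mod{\lambda} < 1$. Since $T$ is injective but not invertible, $T$ must fail to be surjective, so $0 \in \sigma(T)$; but $0 \notin \sigma_{ap}(T)$ because $T$ itself is bounded below. From $\partial\sigma(T) \subseteq \sigma_{ap}(T) \subseteq \partial \D$ it then follows that $0$ lies in the interior of $\sigma(T)$.

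The conceptual heart of the argument---and the step I expect to be the main obstacle to set up cleanly---is a connectedness argument to promote this single interior point to all of $\D$. Since $\partial \sigma(T) \cap \D = \emptyset$, the open disk $\D$ decomposes as the disjoint union of the two relatively open sets $\operatorname{int}(\sigma(T)) \cap \D$ and $\rho(T) \cap \D$. Connectedness of $\D$ forces one of these to be empty, and the first is nonempty because it contains $0$. Therefore $\D \subseteq \sigma(T)$, and since $\sigma(T)$ is closed in $\C$ and already contained in $\overline{\D}$, we conclude $\sigma(T) = \overline{\D}$.
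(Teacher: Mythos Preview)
Your proof is correct and follows essentially the same approach as the paper's: both cases rest on the same ingredients (that $T^{-1}$ is again an isometry together with the spectral mapping identity in the invertible case; the bounded-below estimate and the inclusion $\partial\sigma(T)\subseteq\sigma_{ap}(T)\subseteq\partial\D$ in the non-invertible case). The only cosmetic difference is in the final topological step, where the paper runs a radial line from $0$ to a hypothetical point of $\rho(T)\cap\overline{\D}$ to locate a boundary point of $\sigma(T)$ inside $\D$, whereas you invoke connectedness of $\D$ directly---these are interchangeable ways of exploiting $\partial\sigma(T)\cap\D=\emptyset$.
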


\begin{proof} Suppose $T$ is an invertible isometry on $E$.  Then $0 \not\in \sigma(T)$, and so the function $z \mapsto z^{-1}$ is analytic in some neighborhood of $\sigma(T)$.  By the Spectral Mapping
Theorem (see \cite{Conway:90}, p. 204), we have $\sigma(f\circ T) = f(\sigma(T))$, and so \[\sigma(T^{-1}) = \sigma(T)^{-1} = \{\lambda^{-1} : \lambda \in \sigma(T)\}.\]  Since $T^{-1}$ exists and is an isometry, we have $\sigma(T^{-1}) \subseteq \overline{\D}$.  Therefore $\sigma(T)
\subseteq \partial\D$.

Next, suppose $T$ is not invertible.  In order to prove that $\sigma(T) = \overline{\D}$, it suffices to show that $\D \subseteq \sigma(T)$.  For $\lambda \in \D$, $T-\lambda I$ is bounded below by $1-\mod{\lambda}$.  Thus, $\lambda \not\in \sigma_{ap}(T)$.  We deduce that
$\partial\sigma(T)\subseteq \sigma_{ap}(T) \subseteq \partial\D$.

Assume $\lambda \in \overline{\D}\cap\rho(T)$.  Note that $\lambda \not\in \partial\sigma(T)$ since $\partial\sigma(T) = \sigma(T) \cap
\overline{\rho(T)}$.  Consider $\Gamma = \{t\lambda : t \in [0,\infty)\}$, the radial line through $\lambda$.  Since $\sigma(T)$ is closed and $0 \in \sigma(T)$, there exists $t \in [0,1)$ such that $t\lambda \in \partial\sigma(T)$.  This contradicts the fact that $\partial\sigma(T) \subseteq \partial\D$.  Consequently, $\D \subseteq \sigma(T)$.\end{proof}

For a complex number $\zeta$ of modulus 1, define the \emph{order of $\zeta$}, denoted by $\ord(\zeta)$, to be the smallest $n \in \N$ such that $\zeta^n = 1$.  If no such $n$ exists, we say $\zeta$ has infinite order, and write $\ord(\zeta) = \infty.$

\begin{theorem}\label{spectrum iso C_phi theorem} Suppose $\varphi$ induces an isometric composition operator on $\Bloch$.  If $\varphi$ is not a rotation, then $\sigma(C_\varphi) = \overline{\D}$.  If $\varphi(z) = \zeta z$ with $\mod{\zeta} = 1$, then \[\sigma(C_\varphi) = \begin{cases}\partial\D & \text{if } \ord(\zeta) = \infty,\\\ip{\zeta}& \text{if } \ord(\zeta) < \infty,\end{cases}\] where $\ip{\zeta}$ is the cyclic group generated by $\zeta$.\end{theorem}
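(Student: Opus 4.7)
The plan is to invoke Proposition \ref{conway proposition} together with the structural result in Observation \ref{iso comp op obs}, splitting according to whether $\varphi$ is a rotation.

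\textbf{Case 1 ($\varphi$ not a rotation).} By Observation \ref{iso comp op obs}, the zero set of $\varphi$ is an infinite sequence $\{a_n\}$ of distinct points in $\D$. I would show that $C_\varphi$ is not invertible by exhibiting a Bloch function outside the range of $C_\varphi$: the identity map $\id$ itself works. Indeed, any $f \in \Bloch$ satisfying $C_\varphi f = \id$ would obey $f(0) = f(\varphi(a_n)) = a_n$ for every $n$, which is impossible since the $a_n$ are distinct. Hence $C_\varphi$ is a non-invertible isometry, and the second clause of Proposition \ref{conway proposition} immediately yields $\sigma(C_\varphi) = \overline{\D}$.

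\textbf{Case 2 ($\varphi(z) = \zeta z$ with $\mod{\zeta} = 1$).} In this case $C_\varphi$ is invertible with inverse $C_{\overline{\zeta}z}$, so Proposition \ref{conway proposition} gives $\sigma(C_\varphi) \subseteq \partial\D$. The monomials are eigenvectors: $C_\varphi(z^n) = \zeta^n z^n$ for every $n \geq 0$, so each $\zeta^n$ belongs to $\sigma(C_\varphi)$ and hence $\{\zeta^n : n \geq 0\} \subseteq \sigma(C_\varphi)$. If $\ord(\zeta) = N < \infty$, then $C_\varphi^N = I$, and the Spectral Mapping Theorem applied to $w \mapsto w^N$ forces $\sigma(C_\varphi) \subseteq \{\lambda \in \C : \lambda^N = 1\} = \ip{\zeta}$; combined with the reverse inclusion above this yields equality. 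If $\ord(\zeta) = \infty$, a standard Kronecker-type argument shows that $\{\zeta^n : n \geq 0\}$ is dense in $\partial\D$, and since $\sigma(C_\varphi)$ is a closed subset of $\partial\D$ containing this dense set, it must be all of $\partial\D$.

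There is no serious obstacle in this argument: the only mildly substantive point is the non-invertibility of $C_\varphi$ in Case 1, but once Observation \ref{iso comp op obs} supplies the infinite zero sequence, the test function $\id$ makes this essentially free. The remainder is a direct application of Proposition \ref{conway proposition}, together with an eigenvalue computation on monomials and either the Spectral Mapping Theorem or the density of $\{\zeta^n\}$ in $\partial\D$.
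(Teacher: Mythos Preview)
Your argument is correct and tracks the paper's overall strategy: invoke Proposition~\ref{conway proposition}, use Observation~\ref{iso comp op obs} to obtain infinitely many zeros in the non-rotation case, and read off eigenvalues from the monomials in the rotation case. Your test function $\id$ in Case~1 is a minor variant of the paper's choice $h(z)=z-a$; the mechanism is the same (both force $f(0)$ to assume two distinct values). The one genuine difference is the containment $\sigma(C_\varphi)\subseteq\ip{\zeta}$ when $\ord(\zeta)=N<\infty$: the paper proves this by fixing $\mu\notin\ip{\zeta}$ and explicitly solving $(C_\varphi-\mu I)f=g$ via an $N\times N$ linear system whose coefficient matrix has determinant $(-1)^N(\mu^N-1)\ne 0$, whereas you simply note $C_\varphi^{\,N}=I$ and apply the Spectral Mapping Theorem to $w\mapsto w^N$. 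Your route is shorter and cleaner; the paper's has the modest advantage of being self-contained and exhibiting $(C_\varphi-\mu I)^{-1}$ concretely as a linear combination of the iterates $C_{\varphi^{(j)}}$.
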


\begin{proof}
Assume $\varphi$ is not a rotation.  Then by Proposition \ref{conway proposition} it suffices to show that $C_\varphi$ is not invertible.  Since $C_\varphi$ is an isometry it is necessarily injective.  Thus, we will show $C_\varphi$ is not surjective.  Arguing by contradiction, assume $C_\varphi$ is surjective.  Since $\varphi$ is not a rotation, by Observation \ref{iso comp op obs} $\varphi$ has infinitely many zeros.  Let $a$ and $a'$ be distinct zeros of $\varphi$.

Define $h(z) = z - a$, and note that $h$ is a non-zero Bloch function.  Since $C_\varphi$ is assumed to be surjective, there exists a Bloch function $f$ such that $h = f\circ\varphi$.  On the one hand, $f(0) = f(\varphi(a')) = h(a') \neq 0$.  On the other hand, $f(0) = f(\varphi(a)) = h(a) = 0$, a contradiction.  Thus, $C_\varphi$ is not surjective.  Therefore $\sigma(C_\varphi) = \overline{\D}$.

Now suppose $\varphi(z) = \zeta z$ with $\mod{\zeta} = 1$.  Then $\varphi^{-1}(z) = \frac{1}{\zeta}z$ and $C_\varphi^{-1} = C_{\varphi^{-1}}$.  So by Proposition \ref{conway proposition}, $\sigma(C_\varphi) \subseteq \partial\D$.

Let $G = \ip{\zeta} = \left\{\zeta^k : k \in \N\cup\{0\}\right\}$.  Note that $G \subseteq \partial\D$.  Consider the Bloch function $f(z) = z^k$ for $k \in \N\cup\{0\}$.  Then \[(C_\varphi f)(z) = \zeta^kz^k = \zeta^k f(z).\]  Thus $\zeta^k$ is an eigenvalue of $C_\varphi$ with corresponding eigenfunction $f$.  So $G \subseteq \sigma(C_\varphi)$.

If the order of $\zeta$ is infinite, then $G$ is dense in $\partial\D$.  Since the spectrum is closed, we have $\partial\D = \overline{G} \subseteq \sigma(C_\varphi)$.  Thus $\sigma(C_\varphi) = \partial\D$.

Now suppose $\ord(\zeta) = n < \infty$.  Then $G = \{\zeta^k : k = 1, \dots, n\}$.  We wish to show that $\sigma(C_\varphi) \subseteq G$.  Let $\mu \in \partial\D\setminus G$.  We will show that $C_\varphi - \mu I$ is invertible by proving that for every $g \in \Bloch$, there exists a unique $f \in \Bloch$ such that $f\circ\varphi - \mu f = g.$  Since $\ord(\zeta) = n$, then $\varphi^{(n)}(z) \defeq (\underbrace{\varphi\circ\cdots\circ\varphi}_{\text{$n$-times}})(z) = \zeta^n z = z$.  By repeated application of $\varphi$, we can form the following system of equations:
\begin{equation}\label{spec-sys}
\begin{array}{lclcl}
f(\varphi(z)) & - & \mu f(z) & = & g(z) \\
f(\varphi^{(2)}(z)) & - & \mu f(\varphi(z)) & = & g(\varphi(z)) \\
& \vdots &  & $\vdots$ &  \\
f(z) & - & \mu f(\varphi^{(n-1)}(z)) & = &
g(\varphi^{(n-1)}(z)).
\end{array}
\end{equation}  Equivalently, (\ref{spec-sys}) can be posed as the matrix equation
$Ax = b$ where
\[A = \left[\begin{matrix}
    -\mu & 1 & 0 & 0 & \cdots & 0\\
    0 & -\mu & 1 & 0 & \cdots & 0\\
    \vdots & 0 & \ddots & \ddots & & \vdots\\
    \vdots & & \ddots & \ddots & \ddots & \vdots\\
    0 & & & \ddots & \ddots & 1\\
    1 & 0 & \cdots & \cdots & 0 & -\mu
    \end{matrix}\right], x = \left[\begin{matrix}
    f(z)\\f(\varphi(z))\\
    \vdots\\
    \vdots\\f(\varphi^{(n-2)}(z))\\f(\varphi^{(n-1)}(z))
    \end{matrix}\right], b =
    \left[\begin{matrix}
    g(z)\\g(\varphi(z))\\
    \vdots\\
    \vdots\\g(\varphi^{(n-2)}(z))\\g(\varphi^{(n-1)}(z))
    \end{matrix}\right].\]
Direct calculation shows that $\mathrm{det}(A) = (-1)^n(\mu^n-1)\ne 0$, since $\mu\notin G$. Thus,
$C_\varphi - \mu I$ is invertible. For $\mu \notin G$, the unique solution $f$ of (\ref{spec-sys}) is a (finite) linear combination of the functions $g\circ\varphi^{(j-1)}$, for $j=1,\dots,n$, each of which is Bloch. Thus $f$ is Bloch. Therefore $\sigma(C_\varphi) = G$.\end{proof}

\begin{theorem} Let $\psi$ induce an isometric multiplication operator (so that $\psi$ is a constant $\eta$ of modulus 1) and $\varphi$ induce an isometric composition operator.  If $\varphi$ is not a rotation, then $\sigma(\wco) = \overline{\D}$.  If $\varphi = \zeta z$ for $\mod{\zeta} = 1$, then \[\sigma(\wco) = \begin{cases}\partial\D& \text{if } \ord(\zeta) = \infty,\\\eta\ip{\zeta}& \text{if } \ord(\zeta) < \infty,\end{cases}\] where $\eta\ip{\zeta} = \{\eta\zeta^k : k = 1,\dots,n\}$.\end{theorem}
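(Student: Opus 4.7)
The proof will be almost immediate once we notice the right reformulation. By Theorem \ref{characterization theorem}, the hypothesis that $\psi$ induces an isometric multiplication operator forces $\psi$ to be the constant function $\eta$ with $\mod{\eta}=1$. Consequently, for every $f \in \Bloch$,
$$\wco f = \eta\,(f\circ\varphi) = \eta\, C_\varphi f,$$
so $\wco = \eta\, C_\varphi$ as bounded operators on $\Bloch$.

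The plan is now to invoke the elementary spectral identity $\sigma(\eta T) = \eta\,\sigma(T)$, valid for any nonzero scalar $\eta$ and any bounded operator $T$ on a complex Banach space (this follows at once from $\eta T - \lambda I = \eta(T - \eta^{-1}\lambda I)$, and no Spectral Mapping Theorem is really needed). Applying this with $T = C_\varphi$ yields
$$\sigma(\wco) = \eta\,\sigma(C_\varphi).$$

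The last step is to read off $\sigma(C_\varphi)$ from Theorem \ref{spectrum iso C_phi theorem} and multiply by $\eta$. If $\varphi$ is not a rotation, then $\sigma(C_\varphi) = \overline{\D}$, and since $\mod{\eta}=1$ we have $\eta\,\overline{\D} = \overline{\D}$. If $\varphi(z) = \zeta z$ with $\ord(\zeta) = \infty$, then $\sigma(C_\varphi) = \partial\D$, and $\eta\,\partial\D = \partial\D$. Finally, if $\ord(\zeta) = n < \infty$, then $\sigma(C_\varphi) = \ip{\zeta} = \{\zeta^k : k = 1,\dots,n\}$, and multiplying by $\eta$ gives exactly $\eta\ip{\zeta} = \{\eta\zeta^k : k = 1,\dots,n\}$.

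There is no real obstacle here; the only thing to be careful about is making explicit the reduction $\wco = \eta C_\varphi$ and the fact that scalar multiplication by a unimodular constant leaves $\overline{\D}$ and $\partial\D$ invariant, so that the first two cases collapse neatly onto those of Theorem \ref{spectrum iso C_phi theorem}.
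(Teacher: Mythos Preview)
Your proof is correct and follows essentially the same approach as the paper: both observe that $\wco = \eta C_\varphi$, use the identity $\eta C_\varphi - \lambda I = \eta(C_\varphi - \eta^{-1}\lambda I)$ to conclude $\sigma(\wco) = \eta\,\sigma(C_\varphi)$, and then read off the result from Theorem~\ref{spectrum iso C_phi theorem}. Your version is slightly more explicit about why $\eta\overline{\D} = \overline{\D}$ and $\eta\partial\D = \partial\D$, but the argument is the same.
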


\begin{proof}
Observe that $\wco = \eta C_\varphi$ and, for $\lambda \in \C$, $\wco - \lambda I = \eta(C_\varphi - \lambda\conj{\eta}I)$.  Thus, $\wco - \lambda I$ is not invertible if and only if $C_\varphi - \lambda\conj{\eta}I$ is not invertible.  Thus $\lambda \in \sigma(\wco)$ if and only if $\lambda\conj{\eta} \in \sigma(C_\varphi)$.  The result follows immediately from Theorem \ref{spectrum iso C_phi theorem}.
\end{proof}

\bibliographystyle{amsplain}
\bibliography{references.bib}
\end{document}